\documentclass[11pt]{article}

\usepackage[margin=1in]{geometry}
\usepackage{amsmath,amssymb,amsthm,mathtools}
\usepackage{enumitem, comment}
\usepackage[hidelinks]{hyperref}

\newtheorem{theorem}{Theorem}
\newtheorem{lemma}{Lemma}
\newtheorem{proposition}{Proposition}

\newtheorem{remark}{Remark}

\newcommand{\B}{\mathbb B}
\newcommand{\C}{\mathbb C}
\newcommand{\PP}{\mathbb P}
\newcommand{\ii}{\sqrt{-1}}
\newcommand{\ord}{\operatorname{ord}}
\newcommand{\trdeg}{\operatorname{trdeg}}

\title{Rigidity for local holomorphic isometric maps from the complex unit ball to reducible
bounded symmetric domains}

\author{Yuan Yuan; Xu Zhang}
\date{}

\begin{document}

\maketitle

\begin{abstract}
Let \(U\) be a connected open subset of the complex unit ball
\(\B^n\), \(n\geq2\), and let \(F_i:U\to\Omega_i\) be non-constant
holomorphic maps into irreducible bounded symmetric domains.  We prove
that if $F=(F_1, \cdots, F_m)$ is a holomorphic isometric map from $U$ to the product of $\Omega_i$, then
 every  \(F_i\) extends to a proper holomorphic isometric embedding up to a
positive constant from \(\B^n\) into \(\Omega_i\).
\end{abstract}

\section{Introduction}
A local holomorphic map that preserves a canonical metric is subject to
two rather different constraints: it may be forced to continue beyond
its initial domain, and its image may be forced to have a prescribed
geometric structure.  Calabi's work \cite{C53} provided an early
framework for both phenomena in the real-analytic K\"ahler category.
For bounded symmetric domains, these questions acquire an additional
splitting aspect.  If \(D\) is irreducible, the target is a product
\(\Omega_1\times\cdots\times\Omega_m\), and a holomorphic map 
\(F=(F_1,\ldots,F_m)\) defined on a connected open subset $U \subset D$ obeys
\[
 \omega_D=\sum_{i=1}^m\lambda_iF_i^*\omega_{\Omega_i},
\]
on $U$ for constant $\lambda_i>0$, 
the metric identity concerns the map \(F\) as a whole.  
Mok's deep and influential  extension theorem then promotes a
local map to a global proper isometric embedding from $D$ to \(\Omega_1\times\cdots\times\Omega_m\). 
\cite{M12}. 
The central
issue is whether it also determines the metric contribution of each
individual factor.

The rank of the source separates two qualitatively different regimes.
Suppose first that \(D\) is irreducible and has rank at least two.
Clozel--Ullmo observed that Mok's Hermitian metric rigidity can be
applied to show total geodesy for a holomorphic isometry from \(D\) to
an arbitrary bounded symmetric domain, including a reducible one
\cite{CU03}.   Thus, for higher-rank sources, the strongest natural
rigidity conclusion is already available.

No parallel statement can hold for a rank-one source, namely  a complex ball, and it supports isometric embeddings that are not
totally geodesic.  Mok produced examples from the disk to reducible
bounded symmetric domains and later constructed, for every irreducible
higher-rank bounded symmetric domain \(\Omega\), an unexpected proper nonstandard
isometric embedding from \(\mathbb B^d \) to \(\Omega\) for an appropriate
dimension \(d\) using the theory of VMRT  \cite{M12, M16}.  
Any
general theorem for a ball must leave room for these nonstandard maps.
What can still be rigid is the decomposition of the metric as one may ask
that every nonconstant component be an isometry after rescaling,
without demanding that it be totally geodesic.  This is the optimal 
conclusion compatible with Mok's constructions.

Earlier work established this componentwise principle for important
classes of targets.  Yuan--Zhang treated products of balls when the
source is \(\mathbb B^n\), \(n\geq2\), and showed that each
nonconstant factor is totally geodesic \cite{YZ12}.  Their
argument passes through algebraic continuation and then uses boundary
behavior together with a CR linearity theorem.  Xiao later 
obtained the desired factorwise conclusion for products whose irreducible
members are balls or Lie balls \cite{X22}.  The interested reader may also refer to related results such as \cite{M02, N11, MN12, HY14, CM17, M18, UWZ19, XY20, FHX20, D23, CHYZ25, MW26, D26}.

\medskip

We solve the general problem (cf. Problem 5.1 in \cite{Y19}) in this paper.  
\begin{theorem}\label{thm:main}
Let \(n\geq2\), let \(U\subset\B^n\) be connected and open, and let
\(\Omega_i\) be irreducible bounded symmetric domains in their
Harish--Chandra realizations for $1\leq i\leq m$.  Suppose that
$ F_i:U\rightarrow\Omega_i $
is a non-constant holomorphic map for each $i$, and
\begin{equation}\label{eq:main-assumption}
 \omega_{\B^n}
 =\sum_{i=1}^m\lambda_iF_i^*\omega_{\Omega_i},
\end{equation}
holds for constant $\lambda_i>0$. 
Let \(\gamma_i\) denote the genus of \(\Omega_i\).  Then there are
integers \(k_i\geq1\) such that each \(F_i\) extends uniquely to a proper holomorphic
isometric embedding satisfying 
\begin{equation}\label{eq:main-quantization}
 F_i^*\omega_{\Omega_i}
 =\frac{\gamma_i k_i}{n+1}\,\omega_{\B^n},
\end{equation}
with $ \sum_{i=1}^m \lambda_i\frac{\gamma_i k_i}{n+1}=1.$
\end{theorem}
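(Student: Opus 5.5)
We propose to pass to potentials, invoke Mok's extension theorem for the map as a whole, and then reduce the factorwise statement to a boundary computation on the sphere. With the normalization \(\omega_{\B^n}=-\ii\partial\bar\partial\log(1-|z|^2)\) and \(\omega_{\Omega_i}=-\ii\partial\bar\partial\log h_i(w,\bar w)\), where \(h_i\) is the generic norm and \(\omega_{\Omega_i}\) is normalized so that the Bergman kernel is \(c_i h_i^{-\gamma_i}\), assumption \eqref{eq:main-assumption} is equivalent, after normalizing \(F(0)\) by automorphisms and polarizing (Calabi's diastasis), to the functional identity
\[
 1-\langle z,\bar\zeta\rangle=\prod_{i=1}^m h_i\bigl(F_i(z),\overline{F_i(\zeta)}\bigr)^{\lambda_i}
\]
near the diagonal.

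First, we apply Mok's extension theorem \cite{M12}. It shows that \(F\) extends to a proper holomorphic isometric embedding \(\B^n\to\prod\Omega_i\), and that each \(F_i\) is algebraic. Hence every \(F_i\) is holomorphic on \(\B^n\), and \(h_i(F_i,\overline{F_i})\) is a real-analytic function on \(\B^n\) with values in \((0,1]\). Properness of the product gives that, as \(|z|\to1\), at least one factor \(h_i(F_i(z))\to0\).

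The key step is to show that every factor degenerates at the boundary at the rate \((1-|z|^2)^{\gamma_ik_i/(n+1)}\)-compatible order. Concretely, we want \(F_i^*\omega_{\Omega_i}-c_i\omega_{\B^n}\) to be \(\ii\partial\bar\partial\) of a function \(\phi_i\) that is pluriharmonic.

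Set \(\psi_i=\log h_i(F_i,\overline{F_i})\). Then \(\ii\partial\bar\partial\psi_i\le0\), and \(\sum\lambda_i\psi_i=\log(1-|z|^2)\). We plan to use the Bergman kernel and canonical bundle viewpoint.

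1. Each \(\omega_{\Omega_i}\) is, up to \(\gamma_i\), the Bergman metric. By the Jacobian/Ricci identity on the image, \(\operatorname{Ric}(F_i^*\omega_{\Omega_i})\) is controlled by the second fundamental form through the Gauss equation.
2. For \(n\ge2\), the holomorphic sectional curvature of \(\B^n\) is constant. The Gauss equation for the whole map, combined with the fact that each \(\Omega_i\) has nonpositive bisectional curvature, forces the pieces to satisfy curvature inequalities.
3. Along the algebraic boundary, we exploit the fact that the sphere is strongly pseudoconvex of CR dimension \(n-1\ge1\). By algebraicity, near a generic boundary point \(p\) the function \(h_i(F_i)\) behaves like \(\rho^{a_i}\cdot(\text{unit})\), with \(\rho=1-|z|^2\) and rational \(a_i\ge0\). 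Taking \(\log\) and applying \(\partial\bar\partial\) restricted to complex tangent directions of the sphere, where \(n\ge2\) is used, shows that the unit factor is CR-pluriharmonic. It therefore extends to make \(\psi_i-a_i\log\rho\) pluriharmonic on \(\B^n\). Boundedness then forces it to be constant, giving \(F_i^*\omega_{\Omega_i}=a_i\omega_{\B^n}\).

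Second, we rule out \(a_i=0\). Since \(F_i\) is non-constant and \(\psi_i\) is strictly plurisuperharmonic somewhere, \(a_i>0\), so each \(F_i\) is a proper isometry up to scale.

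Third, the quantization \(a_i=\gamma_ik_i/(n+1)\) follows by comparing Bergman kernels. Since \(F_i\) is an isometry, \(h_i(F_i(z),\overline{F_i(\zeta)})^{\gamma_i}=(1-\langle z,\bar\zeta\rangle)^{a_i\gamma_i/ a_i\cdot\ldots}\), and the integrality of the powers of \(h_i\), whose \(\gamma_i\)-th power, or the Wallach set, is a polynomial, forces \((n+1)a_i/\gamma_i\in\mathbb Z_{>0}\). Alternatively, one can use the Jacobian identity \(\det(F_i')^2\)-type relation between Bergman kernels. Uniqueness of the extension is immediate from the identity theorem.

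The main obstacle is step 3 of the key step: obtaining an exact power behavior \(\rho^{a_i}\) for each individual factor. Algebraicity alone yields Puiseux-type expansions only along curves, and cancellation between factors must be excluded. We would first try to use the rank-stratification of \(\partial\Omega_i\) together with the CR linearity idea from \cite{YZ12}. A generic boundary point of the sphere maps into a stratum on which \(h_i\) vanishes to a fixed order. Transversality of the Levi form in \(n\ge2\) then pins down \(a_i\).
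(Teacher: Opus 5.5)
Your proposal has a genuine gap, at exactly the step you flag as the main obstacle: splitting the metric identity into factorwise isometries. That splitting is the whole content of the theorem.

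Applying Mok's extension theorem to $F$ as a whole is legitimate, but it gives no factorwise information. The local model $h_i(F_i,\overline{F_i})=\rho^{a_i}u_i$, with a unit $u_i$ and a single exponent $a_i$ at generic boundary points, does not follow from algebraicity. A priori the zero set of $h_i(F_i,\overline{F_i})$ on the sphere could be a proper piece of it, with the exponent jumping across its edge. This is exactly what happens for $n=1$. Take Mok's $p$-th root map $\tau\mapsto(\tau^{1/p},e^{\pi\ii/p}\tau^{1/p},\ldots,e^{(p-1)\pi\ii/p}\tau^{1/p})$ on the upper half-plane, with the branch of $\tau^{1/p}$ of argument in $(0,\pi/p)$. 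The factor $\operatorname{Im}\tau^{1/p}$ vanishes to order one along $\tau>0$ and to order zero along $\tau<0$. So any such claim must use $n\geq2$ in a substantive way, which you do not supply.

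Even granting the model, the only relation you have is the weighted sum identity $\sum_i c_i\psi_i=\log\rho$ with $c_i>0$. With the Bergman normalization the weights are $c_i=\lambda_i\gamma_i/(n+1)$, not $\lambda_i$. This identity yields $\sum_i c_ia_i=1$ and $\sum_i c_i\log u_i=0$, and nothing more. Applying $\partial\bar\partial$ along complex tangent directions of the sphere only sees this sum. Semipositivity of $F_i^*\omega_{\Omega_i}$ gives no sign information on $\ii\partial\bar\partial\log u_i$ near the sphere when $a_i>0$, because the $a_i\omega_{\B^n}$ term, of size $a_i/\rho$ in complex tangent directions, dominates. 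So nothing excludes cancellation between factors, and nothing makes an individual $\log u_i$ pluriharmonic. (Incidentally, a bounded pluriharmonic function on $\B^n$ need not be constant, though you would not need constancy.)

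The missing ingredient is a mechanism that separates the factors using the positivity of the weights. The paper supplies one algebraically, in the following steps.
\begin{enumerate}
\item It realizes each $\Omega_i$ in a generalized ball via a signature decomposition of the generic norm. The polarized potentials $Q_i(z,w)=1+\sum_\alpha\varepsilon_{i\alpha}f_{i\alpha}(z)f_{i\alpha}^\#(w)$ then have finite Hermitian rank, and it proves the $f_{i\alpha}$ are algebraic.
\item On the normalizations $X,Y$ of $\PP^n$ in the resulting function fields, $d\log K=\sum_ia_i\,d\log Q_i$ holds globally on $X\times Y$.
\item Poles of $Q_i$ lie only on vertical or horizontal divisors. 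Hence $\ord_DQ_i\geq0$ on every mixed prime divisor $D$, and residues give $\ord_DK=\sum_ia_i\ord_DQ_i$.
\item Bertini makes the lifted incidence divisor prime with multiplicity one; this is where $n\geq2$ enters. That gives $1=\sum_ia_ik_i$ with $k_i\in\mathbb Z_{\geq0}$, while positivity of the $a_i$ forces $\ord_DQ_i=0$ on every other mixed divisor.
\item A Rosenlicht-type factorization and the normalization at the origin then give $Q_i=K^{k_i}$. This yields the factorwise isometry and the integrality at once.
\end{enumerate}
Only afterwards is Mok's theorem applied, to each $F_i$ separately. Your finite-rank argument for the integrality of $c$ in $(1-\langle z,\bar\zeta\rangle)^c$ would be fine once the splitting is known, but it cannot replace the splitting.
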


Note that there is  no
restriction on the type, rank, or dimension of the irreducible bounded
symmetric domain \(\Omega_i\) in Theorem \ref{thm:main}.  
Even when all $\lambda_i$ are equal, (\ref{eq:main-quantization}) becomes the equation of  holomorphic isometry with respect to some conformal constant from $\mathbb{B}^n$ to \(\Omega_1\times\cdots\times\Omega_m\) with respect to Bergman metrics and the theorem is unknown before. 
The
condition \(n\geq2\) is necessary as shown in Mok's example of $p$-th root embedding \cite{M12}. The phenomenon also appears here since the divisor used in
the proof can split for a disk and thereby permits root-type
isometries. Nevertheless, the problem for the disk is extensively studied by many authors (cf. \cite{MN09, N10, C16, C17, CY19}).

Our proof places the results of Yuan--Zhang and Xiao in one common
scheme (for any and all types of the target bounded symmetric domains), and it does not rely on CR boundary geometry.  For each
irreducible factor \(\Omega_i\), its generic norm is a finite Hermitian
polynomial and a signature decomposition of that polynomial realizes
\(\Omega_i\) inside a generalized ball endowed with an indefinite
canonical form, using the idea in \cite{XY20}.  After composing with these realizations, it remains
to split an isometry from \(\mathbb B^n\) to a product of generalized
balls.  Finite Hermitian rank first leads to algebraicity of the component 
functions.  We then place their branches on finite normal covers of
projective space.  Bertini irreducibility 
identifies the lifted incidence hypersurface as a prime divisor occurring with multiplicity one, while logarithmic residues eliminate every other mixed
divisor.  Separating the remaining vertical and horizontal contributions
then shows that every target potential function equals the source potential function raised to a
nonnegative integer exponent.  In this
way the key input is algebraic geometry rather than CR geometry,
and all irreducible target domains are handled by the same argument.

\medskip

We finish the introduction by recalling some terminology. 
For \(0\leq\kappa<M\), let
\[
 H_\kappa(\xi,v)
 =-\sum_{\alpha=1}^{\kappa}\xi_\alpha v_\alpha
  +\sum_{\alpha=\kappa+1}^{M}\xi_\alpha v_\alpha
\]
be the complex-bilinear polarization of the relevant Hermitian form, 
with the value   \(H_\kappa(\xi,\overline\xi)\) on the diagonal.
Define the generalized ball
\[
 \B^M_\kappa
 =\{\xi\in\C^M:1-H_\kappa(\xi,\overline\xi)>0\}
\]
with the indefinite canonical form 
$ \eta_{\kappa,M} =-\ii\,\partial\bar\partial   \log\bigl(1-H_\kappa(\xi,\overline\xi)\bigr).$
When \(\kappa=0\), this is the usual complex hyperbolic form. 
 In particular,
$ \omega_{\B^n}=(n+1)\eta_{0,n}.$
Here, $ \omega_D=\ii\,\partial\bar\partial\log K_D(z,\overline z)$ denotes the Bergman form and  \(K_D\)  the Bergman kernel 
for a bounded domain \(D\).

\section{A rigidity result for generalized balls}

The next theorem is the key to proving Theorem \ref{thm:main}, which generalizes the rigidity result in \cite{YZ12} to generalized balls. 

\begin{theorem}\label{thm:generalized-balls}
Let \(n\geq2\), let \(U\subset\B^n\) be a connected  open set, and let
$ G_i:U\rightarrow\B^{M_i}_{\kappa_i}$
be a holomorphic map with $0\leq\kappa_i<M_i$.  If
\begin{equation}\label{eq:generalized-assumption}
 \eta_{0,n}
 =\sum_{i=1}^m a_iG_i^*\eta_{\kappa_i,M_i}
\end{equation}
holds on $U$ for $a_i>0$, 
then there are integers \(k_i\geq0\) such that
\begin{equation}\label{eq:generalized-conclusion}
 G_i^*\eta_{\kappa_i,M_i}=k_i\eta_{0,n}
\end{equation}
and $ \sum_{i=1}^ma_i k_i=1.$
\end{theorem}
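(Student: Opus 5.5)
Write $\rho(z)=1-|z|^2$ for $z\in\B^n$, and $\phi_i(z,\bar w)=1-H_{\kappa_i}(G_i(z),\overline{G_i(w)})$ for the polarized target potentials. The plan follows the scheme outlined in the introduction: algebraicity first, then divisor analysis on a finite cover of projective space.

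\textbf{Step 1: reduce to a functional identity.} Integrating \eqref{eq:generalized-assumption}, we may assume (after shrinking $U$ and normalizing, e.g.\ composing with automorphisms of $\B^n$ and $\B^{M_i}_{\kappa_i}$ so that a base point maps to $0$) that
\[
 \rho(z)=\prod_{i=1}^m \phi_i(z,\bar z)^{a_i}.
\]
Polarizing gives
\[
 1-\langle z,\bar w\rangle=\prod_i\phi_i(z,\bar w)^{a_i}
\]
near the diagonal. Note that each $\phi_i$ is a Hermitian polynomial of finite rank in $G_i$.

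\textbf{Step 2: algebraicity.} The standard Huang/Mok-type argument applies: differentiate $\log$ of the polarized identity in $\bar w$, fix finitely many $w$, and use the finite rank of each $\phi_i$ together with the rational form of $\partial_{\bar w}\log(1-\langle z,\bar w\rangle)$. This shows that the $G_i$ satisfy a linear system with rational coefficients. Hence each component of each $G_i$ is algebraic over $\C(z)$; I would follow the argument of \cite{YZ12}/\cite{XY20}, which does not use definiteness of $H_\kappa$.

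\textbf{Step 3: divisor analysis (the main obstacle).}
\begin{itemize}
\item Choose a finite normal variety $X\to\PP^n$ over which all branches of all $G_i$ become rational, and form $X\times\overline X$.
\item On $X\times\overline X$, consider the lifted incidence hypersurface $\widetilde S$ given by $1-\langle z,\bar w\rangle=0$. Because $n\ge2$, the quadric $\{\langle z,\bar w\rangle=1\}$ is irreducible, and Bertini-type irreducibility lets us arrange that $\widetilde S$ is a prime divisor pulled back with multiplicity one. This is exactly where $n\ge2$ enters; for $n=1$ the preimage can split, allowing root maps.
\item Take logarithmic differentials of the polarized identity, which holds on $X\times\overline X$ by analytic continuation. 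This gives $\sum_i a_i\, d\log\phi_i=d\log(1-\langle z,\bar w\rangle)$, and taking residues along every prime divisor $D$ yields
\[
 \sum_i a_i\,\ord_D\phi_i=\ord_D(1-\langle z,\bar w\rangle).
\]
\item Mixed divisors $D\ne\widetilde S$ (neither of the form $A\times\overline X$ nor $X\times\overline B$) must then contribute zero. Here I would use positivity: near the diagonal, every $\phi_i$ vanishing on $D$ would force the identity's right-hand side to vanish, so no such mixed zeros or poles can occur. The delicate point is that the $a_i$ are real, so a cancellation $\sum a_i\ord_D\phi_i=0$ does not by itself imply each order vanishes. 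I would try to use the symmetry $(z,w)\mapsto(w,z)$ together with the indefinite-form positivity on the diagonal, or a separate argument for each $i$, to show each $\ord_D\phi_i\ge0$ on mixed divisors and hence $=0$.
\item Along $\widetilde S$, set $k_i=\ord_{\widetilde S}\phi_i\in\mathbb Z_{\ge0}$. Multiplicity one then gives $\sum a_ik_i=1$.
\end{itemize}

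\textbf{Step 4: separate horizontal and vertical parts.} After Step 3, $\phi_i/(1-\langle z,\bar w\rangle)^{k_i}$ has divisor supported on vertical and horizontal divisors. It therefore factors as $h_i(z)\overline{h_i(w)}$ with $h_i$ rational, and using the symmetry of $\phi_i$ we can write it as $h_i(z)\,\overline{h_i(w)}$ up to a constant. At the base point $\phi_i(0,\cdot)=1$, which forces $h_i$ to be constant. Hence $\phi_i(z,\bar z)=\rho^{k_i}$, and applying $-\ii\partial\bar\partial\log$ gives \eqref{eq:generalized-conclusion}.
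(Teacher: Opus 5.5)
\textbf{Main gap: Step 3.} The step that decides Step 3 is left open, and the routes you suggest would not close it. The missing idea is an observation about where $\phi_i$ can have poles. On $X\times\overline X$ one has $\phi_i=1+\sum_\alpha\varepsilon_{i\alpha}f_{i\alpha}(x)\,f^\#_{i\alpha}(y)$, with $f^\#(w)=\overline{f(\overline w)}$. This is a finite sum of products of a meromorphic function of $x$ alone with one of $y$ alone, so every polar component of $\phi_i$ is vertical or horizontal. Consequently $\ord_D\phi_i\ge0$ for every prime divisor $D$ that is neither vertical nor horizontal.

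For such $D\neq\widetilde S$, the residue identity $0=\sum_i a_i\ord_D\phi_i$, with all $a_i>0$ and all orders nonnegative, forces $\ord_D\phi_i=0$ for every $i$. For $D=\widetilde S$, the same observation is what justifies $k_i\in\mathbb Z_{\ge0}$, which you assert without proof; without it a negative $k_i$ could be compensated by the others.

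Your argument via positivity near the diagonal does not work. A mixed divisor need not pass anywhere near the base point on the chosen sheets. Moreover, a zero of one $\phi_i$ along $D$ could a priori be cancelled in $\prod_i\phi_i^{a_i}$ by a pole of another $\phi_j$. The swap $(z,w)\mapsto(w,z)$ gives no control on these signs either.

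\textbf{Second gap: Step 2 makes a false claim.} For indefinite targets the components of $G_i$ need not be algebraic. Take $G_1=\mathrm{id}\colon U\to\B^n$ with $a_1=1$, and $G_2=(g,g)\colon U\to\B^2_1$ with $g$ a transcendental germ vanishing at $0$. Then $1-H_1(G_2,\overline{G_2})\equiv1$, so $G_2^*\eta_{1,2}=0$, and the hypothesis holds for every $a_2>0$.

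What is true, and what the paper proves, is algebraicity of the functions $f_{i\alpha}$ in a minimal decomposition $\phi_i(z,\bar w)=1+\sum_\alpha\varepsilon_{i\alpha}f_{i\alpha}(z)\overline{f_{i\alpha}(w)}$ with $1,f_{i1},\dots,f_{ir_i}$ linearly independent. The cover $X$ must be built from these $f_{i\alpha}$, not from the $G_i$.

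Your proposed mechanism also fails. With $m\ge2$ factors and real weights, clearing denominators in $\partial_{\bar w}\log$ of the identity gives multilinear polynomial relations in the $f_{i\alpha}(z)$, not a linear system. The ``linear system'' idea only works for a single factor. The paper instead argues by transcendence degree, in four moves:
\begin{enumerate}[label=(\roman*)]
\item Replace a transcendence basis by free variables $X$.
\item Show that the lifted identity holds independently of $X$.
\item Restrict to a curve in $X$-space on which some lifted $f_{i\alpha}$ has a pole $p$. Linear independence of the $f^\#_{i\alpha}$ makes the leading coefficients nonzero for generic $w$, so the lifted potentials satisfy $\ord_p\widehat\phi_i\le0$ for all $i$, with strict inequality for some $i$.
\item Take the residue at $p$, which gives $0=\sum_i a_i\ord_p\widehat\phi_i<0$, a contradiction.
\end{enumerate}
This is the same ``orders of one sign plus $a_i>0$'' mechanism that you were missing in Step 3.
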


\subsection{Normalization and finite Hermitian rank}

Fix a point  \(p \in U\).  The homogeneous Hermitian form associated with
\(\B^M_\kappa\) is
\[
 \mathcal H_\kappa((s,\xi),(t,\upsilon))
 =s\overline t-H_\kappa(\xi,\overline\upsilon),
\]
of signature \((\kappa+1,M-\kappa)\).  Since the pseudo-unitary group is
transitive on the positive projective lines, a projective
pseudo-unitary transformation sends the line \([1:G_i(p)]\) to
\([1:0]\).  In the affine chart this is a fractional linear map defined
on a neighborhood of \(G_i(p)\), and it satisfies
\[
 1-H_{\kappa_i}(T_i(\xi),\overline{T_i(\xi)})
 =\frac{1-H_{\kappa_i}(\xi,\overline\xi)}{|j_i(\xi)|^2}
\]
for a nonvanishing affine-linear denominator \(j_i\).  
Together with an
automorphism of the source ball sending \(p\) to the origin, we may assume
$ 0\in U, G_i(0)=0.$
By the standard polarization argument, we
introduce independent complex variables \(z,w\in\C^n\), and write
$ K(z,w)=1-z\cdot w$ with
$ z\cdot w=\sum_{\nu=1}^n z_\nu w_\nu$.
For a scalar holomorphic germ \(h\), set
$ h^\#(w)=\overline{h(\overline w)}.$
For vector-valued germs, \(^{\#}\) is applied componentwise.
The polarized target potential functions are
$ Q_i(z,w)=1-H_{\kappa_i}\bigl(G_i(z),G_i^\#(w)\bigr)$
and satisfy
$ K(z,0)=K(0,w)=Q_i(z,0)=Q_i(0,w)=1.$
By the standard reduction as in \cite{CU03} and the polarization, we get
\begin{equation}\label{eq:polarized-log}
 \log K(z,w)=\sum_{i=1}^ma_i\log Q_i(z,w)
\end{equation}
near \((0,0)\), with all logarithm branches normalized to vanish
there.  Equivalently,
\begin{equation}\label{eq:polarized-dlog}
 d\log K=\sum_{i=1}^ma_i\,d\log Q_i.
\end{equation}
Since every \(Q_i\) has finite Hermitian rank, by Theorem
3.2 in \cite{U88}, we may write 
\begin{equation}\label{eq:minimal-kernel}
 Q_i(z,w)
 =1+\sum_{\alpha=1}^{r_i}
   \varepsilon_{i\alpha}
   f_{i\alpha}(z)f_{i\alpha}^\#(w),
\end{equation}
for $ \varepsilon_{i\alpha}\in\{1,-1\}$,
where $ f_{i\alpha}(0)=0$ and
$ \{1,f_{i1},\ldots,f_{ir_i}\}$
 is linearly independent. 

\subsection{Algebraicity}

The algebraicity argument originates in \cite{HY14, CHYZ25} and we follow the original idea developed in \cite{HY14} here. Note that the linear independence is crucial.

\begin{proposition}\label{lem:effective-algebraicity}
Every function \(f_{i\alpha}\) in
\eqref{eq:minimal-kernel} is algebraic over the rational function
field \(\C(z)\).
\end{proposition}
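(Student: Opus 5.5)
We follow the Huang--Yuan scheme of \cite{HY14}: exponentiate the polarized identity, differentiate in \(w\) at \(w=0\) to obtain a finite system of holomorphic identities in \(z\) that is linear in \(1/Q_i\), and then deduce algebraicity of the \(f_{i\alpha}\) from a transcendence-degree count. Set \(\mathcal F=\C(z)(f_{i\alpha}:1\le i\le m,\ 1\le\alpha\le r_i)\) and let \(t=\trdeg_{\C(z)}\mathcal F\). The goal is to show \(t=0\).

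Differentiating \eqref{eq:polarized-dlog} in \(w_\nu\) gives
\[
 -\frac{z_\nu}{K(z,w)}=\sum_{i=1}^m a_i\,\frac{\partial_{w_\nu}Q_i(z,w)}{Q_i(z,w)},
\]
where \(Q_i\) and \(\partial_w^\beta Q_i\) are linear in \(f_{i\alpha}(z)\) with coefficients \(\partial^\beta f^\#_{i\alpha}(w)\).

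1. \textbf{Choosing a generic point.} We pick \(t\) functions among the \(f_{i\alpha}\) that form a transcendence basis over \(\C(z)\). Near a generic point, their germs together with \(z\) give local coordinates on the graph variety \(V\subset\C^n\times\C^{\sum r_i}\). On \(V\), the remaining \(f_{i\alpha}\) are algebraic functions of \((z,\text{basis})\).

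2. \textbf{Freezing \(w\) and differentiating.} We freeze \(w\) at finitely many generic points, or equivalently apply \(\partial_w^\beta\) at \(w=0\) for \(|\beta|\) up to some large \(N\). Each identity is then a relation \(\Phi_\beta(z,f(z))=0\) that is rational in \(f\) with coefficients polynomial in \(z\). Since the original identities hold for \emph{all} \(w\), we can vary \(w\) and obtain infinitely many such relations.

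3. \textbf{Extending to \(V\).} We view \(\log K(z,w)=\sum_i a_i\log Q_i(z,w)\) as an identity between functions on \(V\times\C^n_w\), where \(Q_i\) is extended by replacing \(f_{i\alpha}(z)\) with the coordinate \(X_{i\alpha}\). Every point of \(V\) near the graph satisfies the identity, by the standard Huang--Yuan argument: the differential identities in \(w\) are algebraic on \(V\) and hold on the graph, which is Zariski dense in the relevant component. If \(t>0\), we get a positive-dimensional family of points \(X\ne f(z)\) over a fixed \(z\) such that \(\prod_i Q_i(z,X;w)^{a_i}=K(z,w)\) for all \(w\).

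4. \textbf{Forcing \(X=f(z)\).} Take logarithmic \(w\)-derivatives and compare the two points \(X\) and \(f(z)\) over the same \(z\). Using linear independence of \(\{1,f^\#_{i\alpha}\}\), and the analogous independence across different \(i\) (reduce first by grouping indices, as in \cite{HY14}), we show \(Q_i(z,X;w)=Q_i(z,f(z);w)\) for each \(i\). Linear independence of \(\{1,f^\#_{i1},\dots,f^\#_{ir_i}\}\) then forces \(X_{i\alpha}=f_{i\alpha}(z)\). This contradicts \(t>0\), so \(t=0\) and every \(f_{i\alpha}\) is algebraic over \(\C(z)\).

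The main obstacle is step 4. The identity only constrains a weighted product of the \(Q_i\) with real exponents \(a_i\), so separating the individual factors needs more than linear algebra in \(w\). I would first try showing that the functions \(w\mapsto\log Q_i(z,X;w)\) for distinct, non-proportional kernels are linearly independent modulo constants, by examining the singular loci \(\{Q_i=0\}\) in \(w\) after analytic continuation. If that fails, I would fall back on the Huang--Yuan induction on the number of factors, removing one factor at a time using the Hermitian-rank minimality in \eqref{eq:minimal-kernel}.
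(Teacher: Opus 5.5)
Your Steps 1--3 agree with the paper. You lift a transcendence basis to free variables $X$. You observe that the $w$-Taylor coefficients at $w=0$ of $\log K-\sum_i a_i\log Q_i$ are algebraic in $(z,X)$ and vanish on the graph. Algebraic independence then shows the identity holds on the whole lifted variety, so for fixed generic $z$ it holds along a $t$-dimensional family of $X$.

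The gap is Step 4. You flag it but do not prove it, and it aims at more than is needed and more than the fixed-$z$ data can give. You want $\sum_i a_i\log Q_i(z,X;w)=\sum_i a_i\log Q_i(z,f(z);w)$ to force $Q_i(z,X;w)=Q_i(z,f(z);w)$ for each $i$, hence $X=f(z)$. At fixed $z$, working only with functions of $w$, linear independence \emph{within} each factor cannot separate the factors. Suppose $a_1=a_2$ and the germs $\{f^\#_{1\alpha}\}_\alpha$ and $\{f^\#_{2\beta}\}_\beta$ span the same space. Then one can solve for $X'$ with $Q_1(z,X';w)=Q_2(z,f(z);w)$ and $Q_2(z,X';w)=Q_1(z,f(z);w)$. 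This $X'$ satisfies the same identity, yet $X'\neq f(z)$ whenever $Q_1\neq Q_2$ at that $z$. Neither fallback you mention (singular loci in $w$, induction on the number of factors) is developed far enough to get around this.

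What is needed is only that the fibre over a fixed $z$ contains no positive-dimensional algebraic family. The paper gets this from pole orders rather than by separating factors:

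\begin{enumerate}
\item Fix a generic $z^0$. Choose an irreducible algebraic curve in the $X$-fibre on which some lifted $\widehat f_{i\alpha}(z^0,X)$ is nonconstant, and normalize its projective closure. That function then has a pole at some point $p$.
\item Since $\widehat Q_i=1+\sum_\alpha\varepsilon_{i\alpha}\widehat f_{i\alpha}f^\#_{i\alpha}(w)$ is affine in the lifted functions, $\ord_p\widehat Q_i\le 0$ for every $i$ once $w$ is fixed generically.
\item The inequality is strict for at least one $i$. The top-order pole coefficient $\sum_\alpha\varepsilon_{i\alpha}c_{i\alpha}f^\#_{i\alpha}(w)$, with $c_{i\alpha}$ the leading Laurent coefficients, is not identically zero by linear independence of the $f^\#_{i\alpha}$.
\item Restrict $d\log K=\sum_i a_i\,d\log\widehat Q_i$ to the curve with $z^0$ and $w$ fixed. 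The left side vanishes because $K$ does not involve $X$.
\item Taking the residue at $p$ gives $0=\sum_i a_i\ord_p\widehat Q_i<0$, since all $a_i>0$.
\end{enumerate}

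This contradiction gives $t=0$. It uses only positivity of the weights and linear independence within each factor, and never needs to identify individual factors.
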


\begin{proof}
Let \(E\) be the field generated over \(\C(z)\) by all the functions
\(f_{i\alpha}\), and suppose
$ r=\trdeg_{\C(z)}E>0.$
Choose a transcendence basis
\(\phi_1,\ldots,\phi_r\) from among these functions and set
\(\phi=(\phi_1,\ldots,\phi_r)\).  Introduce independent variables
\(X=(X_1,\ldots,X_r)\) and set \(\widehat\phi_j(z,X)=X_j\).
Every remaining \(f_{i\alpha}\) is algebraic over
\(\C(z)(\phi_1,\ldots,\phi_r)\).  Choose its minimal polynomial and
replace \(\phi_j\) in its coefficients by \(X_j\).  Since the
extension is separable in characteristic zero, away from the
discriminant the implicit function theorem selects a local algebraic
branch $ \widehat f_{i\alpha}(z,X)$
near a generic point of the graph \(X=\phi(z)\), such that
\[
 f_{i\alpha}(z)
 =\widehat f_{i\alpha}(z,\phi(z)).
\]
After shrinking this neighborhood, put all these branches in a single
finite extension \(L\) of \(\C(z,X)\).  The polarized identity, first
obtained near \((0,0)\), continues to this generic graph neighborhood
by the identity theorem.

Define
\[
 \widehat Q_i(z,X,w)
 =1+\sum_\alpha\varepsilon_{i\alpha}
   \widehat f_{i\alpha}(z,X)f_{i\alpha}^\#(w)
\]
and
\[
 \Psi(z,X,w)
 =\log K(z,w)-\sum_i a_i\log\widehat Q_i(z,X,w).
\]
For every \(w\)-multiindex \(\beta\), set
\[
 A_\beta(z,X)
 =\left.\partial_w^\beta\Psi(z,X,w)\right|_{w=0}
\]
(so \(A_\beta\) is a Taylor derivative, without factorial
normalization).  For \(\beta=0\), one has \(A_0=0\) identically because
\(K(z,0)=\widehat Q_i(z,X,0)=1\).  If \(|\beta|>0\), differentiation of
the logarithms at \(w=0\) expresses \(A_\beta\) rationally in the
algebraic functions \(\widehat f_{i\alpha}\); hence \(A_\beta\in L\).
It vanishes on the selected branch of the graph \(X=\phi(z)\) by
\eqref{eq:polarized-log}.

Suppose that some \(A_\beta\) is nonzero in \(L\), and let
$ T^d+c_{d-1}(z,X)T^{d-1}+\cdots+c_0(z,X)$
be its monic minimal polynomial over \(\C(z,X)\).  Substitution of the
selected branch \(X=\phi(z)\), away from the poles of the coefficients,
yields \(c_0(z,\phi(z))=0\).  The substitution homomorphism
\(\C(z,X)\to E\), given by \(X_j\mapsto\phi_j(z)\), is injective by algebraic
independence, so \(c_0=0\).  Irreducibility then forces the minimal
polynomial to be \(T\), contradicting that \(A_\beta\neq0\).  Thus every
\(A_\beta\) vanishes.  Taylor expansion in \(w\) now yields an identity
with \(X\) independent.  Taking the \emph{total} exterior differential
in \((z,X,w)\) yields
\begin{equation}\label{eq:lifted-identity}
 d\log K=\sum_i a_i\,d\log\widehat Q_i
\end{equation}
on that neighborhood.

Because \(r>0\) and \(\widehat\phi_j=X_j\), at least one lifted function
genuinely depends on \(X\).  Choose a generic \(z=z^0\) and an
irreducible algebraic curve in the \(X\)-space on which one such
function remains nonconstant.  Restrict all lifted functions to this
curve and normalize its projective closure in the common finite
function field induced by \(L\).  On the resulting compact normal
curve, some nonconstant \(\widehat f_{i\alpha}(z^0,X)\) has a pole at a
point \(p\).  The restriction of \eqref{eq:lifted-identity}, valid near
the original point of the curve, extends by the identity theorem as an
identity of meromorphic one-forms on this compact normalization.

Let \(t\) be a local parameter at \(p\).  For an index \(i\) having a
polar component, let \(s_i>0\) be the largest pole order among the
\(\widehat f_{i\alpha}\), and write \(c_{i\alpha}\) for the coefficient
of \(t^{-s_i}\).  The leading coefficient of \(\widehat Q_i\) is
\[
 \ell_i(w)=\sum_{\{\alpha:\,\ord_p\widehat f_{i\alpha}=-s_i\}}
 \varepsilon_{i\alpha}c_{i\alpha}f_{i\alpha}^{\#}(w).
\]
It is not identically zero, because not all \(c_{i\alpha}\) vanish and
the germs \(f_{i\alpha}^{\#}\) are linearly independent. 
If no component for a given \(i\) has
a pole, the limiting value of \(\widehat Q_i\) at \(p\) is an analytic
function of \(w\) equal to \(1\) at \(w=0\), and hence is not
identically zero.  There are only finitely many indices.  We may
therefore choose a single generic \(w\), arbitrarily close to zero,
for which every relevant leading or limiting coefficient is nonzero.
For this \(w\),
$ \ord_p\widehat Q_i\leq0$ for all $i,$
 and $ \ord_p\widehat Q_j<0$ for at least one $j.$
Restrict the total-differential identity
\eqref{eq:lifted-identity} to the curve with \(z=z^0\) and this \(w\)
fixed.  Since \(K(z^0,w)\) is independent of the curve variable, the
residue at \(p\) yields
$ 0=\sum_i a_i\ord_p\widehat Q_i<0,$
because every \(a_i\) is positive.  This contradiction yields
\(r=0\).
\end{proof}

\subsection{Proof of Theorem \ref{thm:generalized-balls}}
The following lemma is a consequence of Bertini's irreducibility
theorem and we include the argument needed for the lifted incidence
divisor.

\begin{lemma}\label{lem:prime-incidence}
Let \(n\geq2\) and \(X, Y\) be irreducible normal projective varieties with
finite surjective morphisms
$ \pi_X:X\rightarrow\PP^n,$
$ \pi_Y:Y\rightarrow\PP^n.$
Define \begin{equation*}
 \mathcal I
 =\left\{
   Z_0W_0-\sum_{\nu=1}^nZ_\nu W_\nu=0
  \right\}
 \subset\PP^n\times\PP^n
\end{equation*}
to be the incidence variety. 
Then the pullback  of $ \mathcal I$
 to \(X\times Y\) given by  
 $$\widetilde{\mathcal I} := (\pi_X\times\pi_Y)^{-1}(\mathcal I) \subset X\times Y$$
is an irreducible, generically reduced divisor.
\end{lemma}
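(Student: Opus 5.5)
Our plan is to view \(\widetilde{\mathcal I}\) as the fibration over \(X\) whose fibre over \(x\) is \(\pi_Y^{-1}(H_{\pi_X(x)})\), where \(H_Z=\{W: Z_0W_0-\sum_\nu Z_\nu W_\nu=0\}\) is the hyperplane dual to \(Z=\pi_X(x)\) under the nondegenerate form. Since \(Z\mapsto H_Z\) is an isomorphism of \(\PP^n\) onto the dual projective space, the hyperplanes \(H_{\pi_X(x)}\) range over \emph{all} hyperplanes of \(\PP^n\) as \(x\) ranges over \(X\), because \(\pi_X\) is surjective.

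\emph{Divisor and purity.} The polynomial \(Z_0W_0-\sum Z_\nu W_\nu\) is a section of \(\mathcal O(1,1)\) on \(\PP^n\times\PP^n\), and its pullback \(s\) is a section of the pullback of \(\mathcal O(1,1)\) to \(X\times Y\). The pullback \(s\) is not identically zero, since \(\pi_X\times\pi_Y\) is surjective. Hence \(\widetilde{\mathcal I}=\{s=0\}\) is a Cartier divisor. Because \(X\times Y\) is irreducible, it has pure codimension one. We will work with the projection \(p:\widetilde{\mathcal I}\to X\), which is proper, and all of whose fibres \(\pi_Y^{-1}(H_{\pi_X(x)})\) have pure dimension \(n-1\) by finiteness of \(\pi_Y\).

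\emph{Irreducibility of generic fibres.} This is the Bertini step, and we expect it to be the main obstacle. We apply Jouanolou's Bertini theorem to the finite morphism \(\pi_Y:Y\to\PP^n\), with \(\dim Y=n\ge2\) and \(Y\) irreducible. It says that for a general hyperplane \(H\) in the dual projective space, the preimage \(\pi_Y^{-1}(H)\) is irreducible. Normality of \(Y\) (regular in codimension one, hence smooth off codimension \(2\)) together with Bertini for smoothness gives that \(\pi_Y^{-1}(H)\) is moreover generically reduced; in fact it is reduced, since it is a Cartier divisor on a Cohen--Macaulay-in-codimension-two variety that is \(R_0\) and \(S_1\). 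Because \(\pi_X\) is surjective, the condition ``general'' holds for \(x\) in a dense open \(V\subset X\). This is exactly where \(n\ge2\) enters: for \(n=1\) the preimage of a point splits into several points, matching the root-type examples.

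\emph{Assembling.} Let \(C\) be the closure of \(p^{-1}(V)\). Over \(V\) the map \(p\) is flat, since \(V\) is normal and the fibres are equidimensional; \(X\) is normal, and after shrinking \(V\) we may take it smooth and use miracle flatness, or simply use the open-map argument. Its fibres are irreducible of the same dimension \(n-1\), so \(p^{-1}(V)\) is irreducible of dimension \(2n-1\). Any other component \(D\) of \(\widetilde{\mathcal I}\) lies over the proper closed set \(X\setminus V\), and its fibres there have dimension \(n-1\). So \(\dim D\le \dim(X\setminus V)+n-1<2n-1\), which contradicts purity; hence \(\widetilde{\mathcal I}=C\) is irreducible. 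Finally, generic reducedness means the multiplicity of \(s\) along \(\widetilde{\mathcal I}\) is one. This follows because the restriction of \(s\) to a general fibre \(\{x\}\times Y\) vanishes to order one at a generic point of \(\pi_Y^{-1}(H)\), by the reducedness established above. Since multiplicity cannot drop on restriction to a fibre through a generic point, the divisor of \(s\) equals \(1\cdot\widetilde{\mathcal I}\).
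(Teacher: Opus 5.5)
Your argument is correct and is essentially the paper's proof: the same fibration over $X$ with fibres $\pi_Y^{-1}(H_{\pi_X(x)})$, Bertini for the finite map $\pi_Y$, purity of the Cartier divisor plus a fibre-dimension count to rule out components lying over a proper closed subset of $X$, and multiplicity one by restricting $s$ to a general fibre. The only real variation is that you get generic reducedness from Bertini smoothness on $Y_{\mathrm{sm}}$, whereas the paper uses that $\pi_Y$ is \'etale off its branch locus; your stated flatness justification fails (miracle flatness needs a Cohen--Macaulay source, and $Y$ is only normal), but flatness is not needed, since properness of $p$ with irreducible fibres of constant dimension $n-1$ over $V$ already gives irreducibility of $p^{-1}(V)$.
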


\begin{proof}
Let
\[
 B(Z,W)=Z_0W_0-\sum_{\nu=1}^n Z_\nu W_\nu,
\]
viewed as a section of
\(\mathcal O_{\PP^n\times\PP^n}(1,1)\), and let
\[
 \widetilde B
 :=
 (\pi_X\times\pi_Y)^*B.
\]
Then
$ \widetilde{\mathcal I}=V(\widetilde B).$
Projecting \(\widetilde{\mathcal I}\) to \(X\), the fiber over
\(x\in X\) is
\[
 \left\{y\in Y:
 B\bigl(\pi_X(x),\pi_Y(y)\bigr)=0\right\}
 =
 \pi_Y^{-1}(H_{\pi_X(x)}),
\]
where
\[
 H_{\pi_X(x)}
 =
 \left\{[W]\in\PP^n:
 \pi_X(x)_0W_0-
 \sum_{\nu=1}^n\pi_X(x)_\nu W_\nu=0\right\}.
\]
Since \(\pi_X\) is surjective, these are all hyperplanes.  Apply
Theorem 3.3.1 in \cite{L04} to
\(\pi_Y:Y\to\PP^n\) with \(d=1\).  Since
\(\dim\pi_Y(Y)=n\geq2\), the inverse image of a general hyperplane is
irreducible.  The Bertini-open set of such hyperplanes pulls back under
the finite surjective map \(\pi_X\) to a dense open subset of \(X\).
We will show that 
it is also generically reduced.  The finite extension
\(\C(Y)/\C(\PP^n)\) is separable in characteristic zero, so
\(\pi_Y\) is \'etale over the complement of a proper branch locus
\( \mathcal B \subset\PP^n\).  A general hyperplane is not contained in \(\mathcal B\);
therefore its generic point lies outside \(\mathcal B\), and its inverse image
is reduced at its generic point.

 Since 
\(\pi_X\times\pi_Y\) is surjective, \(B\) is not identically zero and thus the section \(\widetilde B\) is not identically zero. 
Since \(X\times Y\) is integral, \(V(\widetilde B)\) is an effective
Cartier divisor.
If a component were of the form \(D\times Y\), where
\(D\subset X\) is an irreducible divisor, 
 then for general \(x\in D\)
one would have
$ B\bigl(\pi_X(x),\pi_Y(y)\bigr)=0$
for all $y\in Y.$ 
The surjectivity of \(\pi_Y\) would imply that the nonzero linear form
\(B(\pi_X(x),W)\) vanishes on all of \(\PP^n\), which is impossible.
Let \(C\) be an irreducible component of
\(\widetilde{\mathcal I}\). Since \(\widetilde{\mathcal I}\) is an
effective Cartier divisor in the integral variety \(X\times Y\),
every irreducible component of \(\widetilde{\mathcal I}\) has dimension
\(2n-1\).
Suppose that \(C\) does not dominate \(X\), and set
$ D:=p_X(C).$
Since \(p_X\) is proper, \(D\) is a closed irreducible proper
subvariety of \(X\), so \(\dim D\leq n-1\). By the fiber-dimension
theorem, for a general \(x\in D\),
\[
 \dim C_x
 =\dim C-\dim D
 \geq (2n-1)-(n-1)=n.
\]
Since \(C_x\) is a closed subset of the irreducible \(n\)-dimensional
variety \(Y\), it follows that \(C_x=Y\). The inequality also forces
\(\dim D=n-1\). Since \(C_x=Y\) for general \(x\in D\), taking closures
yields $ C=D\times Y.$
This contradicts the preceding argument.
Hence every irreducible component of
\(\widetilde{\mathcal I}\) dominates \(X\). Interchanging \(X\) and \(Y\) in the preceding argument shows that
\(\widetilde{\mathcal I}\) also dominates \(Y\). Hence it is neither
vertical nor horizontal.

If \(\widetilde{\mathcal I}\) had two distinct irreducible components,
both would dominate \(X\), and their intersections with a general fiber
of \(p_X\) would yield a nontrivial decomposition of that fiber. This
contradicts the irreducibility of the general fiber. Therefore
\(\widetilde{\mathcal I}\) is irreducible.

Let \(C\) denote the unique irreducible component underlying
\(\widetilde{\mathcal I}\). Since \(\widetilde{\mathcal I}\) is an
effective Cartier divisor, we may write
$ \operatorname{div}(\widetilde B)=mC$
for some integer \(m\geq1\). Choose a general \(x\in X\) for which
\(\pi_Y^{-1}(H_{\pi_X(x)})\) is irreducible and generically reduced.
At a generic point \(y\) of this fiber, \(\pi_Y\) is \'etale. Hence
the differential in the \(Y\)-direction of
$ B\bigl(\pi_X(x),\pi_Y(y)\bigr)$
is nonzero. Therefore \(\widetilde{\mathcal I}\) is reduced at
\((x,y)\). Since its support is irreducible, it is reduced at its
generic point, and consequently \(m=1\).
In fact, if \(m>1\), the effective Cartier divisor \(mC\) would be
nonreduced at every point of a dense open subset of \(C\), contradicting
the reduced point constructed above.
\end{proof}

The following lemma is a rational function form of Rosenlicht's
 theorem (cf. Proposition 1.1 in \cite{KKV89}) and we include a short proof for
completeness.

\begin{lemma}
\label{lem:vh-factor}
Let \(X,Y\) be normal irreducible projective varieties.  Call a prime
divisor \emph{vertical} if it is \(D_X\times Y\), and
\emph{horizontal} if it is \(X\times D_Y\), for prime divisors
\(D_X\subset X\), \(D_Y\subset Y\).  If a nonzero meromorphic function
\(R\) on \(X\times Y\) has divisor supported on vertical and horizontal
prime divisors, then
$ R(x,y)=A(x)B(y)$
for meromorphic functions \(A\) on \(X\) and \(B\) on \(Y\).
\end{lemma}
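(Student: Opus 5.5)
We prove Lemma \ref{lem:vh-factor} by restricting \(R\) to a vertical and a horizontal slice through a general base point, and showing that the quotient of \(R\) by the product of these restrictions is a nonzero constant.

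\emph{Choice of base point.} Write
\[
 \operatorname{div}R=\sum_j m_j\,(D_j\times Y)+\sum_k n_k\,(X\times E_k),
\]
with \(D_j\subset X\) and \(E_k\subset Y\) prime divisors. Choose \(x_0\in X\setminus\bigcup_j D_j\) and \(y_0\in Y\setminus\bigcup_k E_k\) general, so that \((x_0,y_0)\) lies on no component of \(\operatorname{div}R\). We also require \(x_0,y_0\) to be smooth points, which is possible since normal varieties are smooth in codimension one.

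\emph{Slices are well defined.} Near the slice \(\{x_0\}\times Y\), the divisor of \(R\) is \(\sum_k n_k(X\times E_k)\). Hence \(R\) is regular and invertible at the generic point of \(\{x_0\}\times Y\), so its restriction is a nonzero rational function on \(Y\). Set \(B(y):=R(x_0,y)\), and similarly \(A(x):=R(x,y_0)\).

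\emph{Divisors of the slices.} The restriction of a vertical divisor \(D_j\times Y\) to \(\{x_0\}\times Y\) is empty, because \(x_0\notin D_j\). The restriction of a horizontal divisor \(X\times E_k\) is \(E_k\) with multiplicity one. We would check this by working locally at the generic point of \(X\times E_k\). There a local equation for \(X\times E_k\) is \(p_Y^*t_k\), where \(t_k\) is a uniformizer of the DVR \(\mathcal O_{Y,E_k}\). We therefore write \(R=u\cdot t_k^{n_k}\) with \(u\) a unit. It remains to see that \(u\) restricts to a unit on \(\{x_0\}\times Y\) at the generic point of \(E_k\).

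\emph{Main obstacle.} This is the one delicate step: it is the restriction of divisors to the slices at codimension-one points. Taking \(x_0\) general avoids the bad locus, since \(u\) is a unit on a dense open subset of \(X\times E_k\) that meets \(\{x_0\}\times E_k\) for general \(x_0\). It follows that \(\operatorname{div}B=\sum_k n_kE_k\), and symmetrically \(\operatorname{div}A=\sum_j m_jD_j\).

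\emph{Conclusion.} Consider
\[
 S(x,y):=\frac{R(x,y)\,R(x_0,y_0)}{A(x)B(y)}.
\]
Here \(R(x_0,y_0)\) is a nonzero finite constant by the choice of base point. The divisor of \(p_X^*A\) is \(\sum_j m_j(D_j\times Y)\), since pullback of divisors under the flat projection commutes with \(\operatorname{div}\). The same holds for \(p_Y^*B\), so \(\operatorname{div}S=0\). Because \(X\times Y\) is normal (a product of normal varieties over \(\C\)), a rational function with zero divisor is regular, hence it is regular on the projective variety \(X\times Y\), and therefore constant. Consequently \(R=c\,A(x)B(y)\), and absorbing the constant \(c\) into \(A\) gives the statement.
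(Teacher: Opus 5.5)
Your proposal is correct and follows the paper's own proof: you use the same ratio $S(x,y)=R(x,y)R(x_0,y_0)/\bigl(R(x,y_0)R(x_0,y)\bigr)$, show it has zero divisor, and conclude via normality and projectivity of $X\times Y$ that $S$ is constant. The one difference is that you justify that the slices $R(\cdot,y_0)$ and $R(x_0,\cdot)$ carry the expected divisors for general $(x_0,y_0)$, which the paper compresses into the phrase ``the vertical and horizontal orders cancel divisor by divisor.''
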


\begin{proof}
Over \(\C\), the product \(X\times Y\) is again normal and
irreducible.  
Choose points \(x_0\in X(\C)\) and \(y_0\in Y(\C)\) outside the
relevant zero and pole divisors, so that \(R(x_0,y_0)\) is finite and
nonzero and the rational functions \(R(\,\cdot\,,y_0)\) and
\(R(x_0,\,\cdot\,)\) are well defined. Define
\[
 S(x,y)
 =
 \frac{R(x,y)R(x_0,y_0)}
      {R(x,y_0)R(x_0,y)}.
\]
The vertical and horizontal orders cancel divisor by divisor, so
\(\operatorname{div}S=0\).  Normality implies that \(S\) and \(S^{-1}\)
are holomorphic.  It follows that \(S\) is constant, and 
\(S(x, y)=S(x_0,y_0)=1\) .  Solving for \(R\), with
$ A(x)=R(x,y_0), 
 B(y)=\frac{R(x_0,y)}{R(x_0,y_0)}$, 
yields the conclusion.
\end{proof}

\begin{proof}[Proof of Theorem~\ref{thm:generalized-balls}]
By Proposition~\ref{lem:effective-algebraicity}, all $f_{i\alpha}$
 in \eqref{eq:minimal-kernel} are algebraic.  Let
\(E_z/\C(z)\) be a common finite extension containing all
\(f_{i\alpha}\), and let \(X\) be the normalization of \(\PP^n\) in
\(E_z\).  Normalization of \(\PP^n\) in a finite extension is finite;
thus \(X\) is normal, irreducible, and projective, and the induced map
\(\pi_X:X\to\PP^n\) is finite and surjective.  Similarly, let
\(E_w/\C(w)\) be the finite extension generated by the coefficientwise
conjugate germs \(f_{i\alpha}^\#\), and let
\(\pi_Y:Y\to\PP^n\) be the corresponding normalization.  
The original algebraic germs select local sheets of \(X\) and \(Y\)
over neighborhoods of the origins.  On these sheets the functions
\(f_{i\alpha}\) and \(f_{i\alpha}^\#\) are the original holomorphic
germs.  The product \(X\times Y\) is normal and irreducible, and each
\(Q_i\) is a meromorphic function on it.  A pole of a summand
\(f_{i\alpha}(x)f_{i\alpha}^\#(y)\) lies over a pole divisor in one of
the two factors; hence all poles of \(Q_i\) are vertical or horizontal.

The local identity \eqref{eq:polarized-dlog} holds on a nonempty open
subset of the product of the selected sheets.  Both sides are
meromorphic one-forms on the irreducible variety \(X\times Y\), so the
identity theorem extends it globally:
\begin{equation}\label{eq:global-dlog}
 \frac{dK}{K}=\sum_i a_i\frac{dQ_i}{Q_i},
\end{equation}
where $ K=\frac{Z_0W_0-\sum_{\nu=1}^nZ_\nu W_\nu}{Z_0W_0}$ in homogeneous coordinates.
Lemma~\ref{lem:prime-incidence} shows that
\(\widetilde{\mathcal I}\) is a prime Weil divisor occurring with
multiplicity one. Since it dominates both factors, it is not a
component of the pole divisor \(Z_0W_0=0\). Therefore
$ \operatorname{ord}_{\widetilde{\mathcal I}}K=1.$
\(\widetilde{\mathcal I}\) is neither vertical nor horizontal, so no \(Q_i\) has a pole
there.  Define
\[
 k_i=\ord_{\widetilde{\mathcal I}}Q_i\in\mathbb Z_{\geq0}.
\]
Taking the residue of \eqref{eq:global-dlog} along
\(\widetilde{\mathcal I}\) yields
$ 1=\sum_i a_i k_i. $ 

Now let \(D\neq\widetilde{\mathcal I}\) be any prime divisor which is
neither vertical nor horizontal.  Then
\[
 \ord_DK=0,\quad \ord_DQ_i\geq0.
\]
The residue of \eqref{eq:global-dlog} at \(D\) is
\[
 0=\sum_i a_i\ord_DQ_i.
\]
Positivity of the \(a_i\)'s forces
\[
 \ord_DQ_i=0\quad\text{for every }i.
\]
Therefore
$ R_i=\frac{Q_i}{K^{k_i}}$ 
has divisor supported only on vertical and horizontal prime divisors.
By Lemma~\ref{lem:vh-factor},
\[
 R_i(x,y)=A_i^\flat(x)B_i^\flat(y).
\]
Let \(x_*\in X\) and \(y_*\in Y\) be the points over \(z=0\) and
\(w=0\) determined by the original selected local branches.  On those
branches, \(f_{i\alpha}(x_*)=f_{i\alpha}^\#(y_*)=0\).  It thus follows from the
normalization \(Q_i(z,0)=Q_i(0,w)=K(z,0)=K(0,w)=1\) that 
$ R_i(x,y_*)=1, R_i(x_*,y)=1.$
These identities hold a priori on nonempty neighborhoods in the
selected sheets; they therefore hold as rational identities on the
irreducible varieties \(X\) and \(Y\).  Moreover, \(R_i\) is
holomorphic and nonzero near \((x_*,y_*)\), so the factors may be
rescaled to be regular and nonzero there.  The rational identity $ R_i(x,y_*)=1$
then forces \(A_i^\flat\) to be constant, and $R_i(x_*,y)=1$
 forces
\(B_i^\flat\) to be constant.
Furthermore, $A_i^\flat (x) B_i^\flat (y)=1$.
  Hence
\[
 Q_i(z,w)=K(z,w)^{k_i}.
\]
Restricting to \(w=\overline z\) and applying
\(-\ii\partial\bar\partial\log\) yields
\[
 G_i^*\eta_{\kappa_i,M_i}=k_i\eta_{0,n}.
\]
This proves
\eqref{eq:generalized-conclusion}. 
By repeating the local argument at
each point of \(U\), we achieve locally constant integer coefficients and these constants must 
 agree because of the connectedness of $U$.
\end{proof}

\begin{remark}
If replacing $\mathbb{B}^n$ in Theorem \ref{thm:generalized-balls} by the generalized ball \(\mathbb B^n_\ell\) with $\ell >0$, the same conclusion still holds by the same argument. 
However  it does not produce a corresponding Theorem \ref{thm:main} with $\mathbb{B}^n$ replaced by \(\mathbb B^n_\ell\) because a positive weighted sum of positive-definite Bergman metrics cannot equal an indefinite metric when \(\ell>0\).
\end{remark}

\section{Proof of the main theorem}

The next lemma is well known to experts and we record it here for completeness.

\begin{lemma}\label{lem:generic-embedding}
Let \(\Omega\subset \C^d\) be an irreducible bounded symmetric
domain in its Harish--Chandra realization, let
\(N_\Omega(z,\overline w)\) be its generic norm, and let
\(\gamma_\Omega\) be its genus.  There exist integers \(p\geq0\),
\(q\geq d\), and holomorphic polynomials
$ P_1,\ldots,P_p,\quad Q_1,\ldots,Q_q$
vanishing at the origin such that
\begin{equation}\label{eq:norm-diagonalization}
 N_\Omega(z,\overline w)
 =1+\sum_{\alpha=1}^pP_\alpha(z)
       \overline{P_\alpha(w)}
   -\sum_{\beta=1}^qQ_\beta(z)
       \overline{Q_\beta(w)}.
\end{equation}
Moreover, the map
$ L_\Omega(z) =(P_1(z),\ldots,P_p(z),Q_1(z),\ldots,Q_q(z))$
is a holomorphic embedding $ L_\Omega:\Omega \rightarrow\B^{p+q}_p$
and satisfies
\begin{equation}\label{eq:generic-isometry}
 L_\Omega^*\eta_{p,p+q}
 =\frac{1}{\gamma_\Omega}\omega_\Omega.
\end{equation}
\end{lemma}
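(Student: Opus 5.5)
The plan is to combine the Jordan-theoretic structure of the generic norm with a signature diagonalization of a finite Hermitian form, and then read off the Bergman metric from the Bergman kernel formula.

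First, recall that in the Harish--Chandra realization the generic norm \(N_\Omega(z,\overline w)\) is a polynomial in \(z\) and \(\overline w\). It satisfies \(N_\Omega(z,0)=N_\Omega(0,w)=1\). The Bergman kernel is
\[
K_\Omega(z,\overline w)=c_\Omega\, N_\Omega(z,\overline w)^{-\gamma_\Omega}.
\]
Consequently \(\omega_\Omega=-\gamma_\Omega\,\ii\partial\bar\partial\log N_\Omega(z,\overline z)\).

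\textbf{Diagonalization.} Write \(N_\Omega(z,\overline w)-1=\sum_{I,J}c_{IJ}z^I\overline w^J\), summing over multi-indices with \(|I|,|J|\geq1\). The coefficient matrix \((c_{IJ})\) is Hermitian because \(N_\Omega(z,\overline z)\) is real-valued. Diagonalize it by a unitary change of the monomial basis. This gives \eqref{eq:norm-diagonalization}, with \(P_\alpha\) coming from the positive eigenvalues and \(Q_\beta\) from the negative ones; all of these polynomials vanish at \(0\).

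\textbf{The bound \(q\geq d\).} The degree-\((1,1)\) part of \(N_\Omega\) is \(-\langle z,w\rangle\), up to the normalization of the trace form (the standard Harish--Chandra normalization). Its coefficient block is negative definite of rank \(d\). Every bihomogeneous block of bidegree \((a,b)\) with \(a\neq b\) vanishes, because \(N_\Omega\) is invariant under the circle action \(z\mapsto e^{i\theta}z\). The bidegree \((a,a)\) blocks are therefore independent diagonal blocks, and the \((1,1)\) block alone contributes \(d\) negative eigenvalues. Hence \(q\geq d\). Moreover the linear coordinates \(z_1,\dots,z_d\) appear among the \(Q_\beta\) after a unitary change, so \(L_\Omega\) is an injective immersion.

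\textbf{Image and pullback.} For \(z\in\Omega\) we have \(N_\Omega(z,\overline z)>0\), so
\[
1-H_p\bigl(L_\Omega(z),\overline{L_\Omega(z)}\bigr)=N_\Omega(z,\overline z)>0,
\]
where the sign convention of \(H_p\) (minus on the first \(p\) coordinates) matches the \(+P\) and \(-Q\) terms. Thus \(L_\Omega(\Omega)\subset\B^{p+q}_p\). Applying \(-\ii\partial\bar\partial\log\) to this identity gives
\[
L_\Omega^*\eta_{p,p+q}=-\ii\partial\bar\partial\log N_\Omega=\frac{1}{\gamma_\Omega}\omega_\Omega,
\]
which is \eqref{eq:generic-isometry}. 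Since \(L_\Omega\) is injective, proper onto its image in the appropriate sense is not needed, and it is an embedding.

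The main obstacle is justifying \(q\geq d\) cleanly, that is, identifying the \((1,1)\) part of \(N_\Omega\) as exactly \(-\langle z,w\rangle\). I would obtain this from the expansion \(N_\Omega(z,w)=1-\operatorname{tr}(z\,\overline w)+\cdots\) in Jordan triple terms, or alternatively from the fact that \(\omega_\Omega\) at \(0\) is a positive multiple of the Euclidean form.
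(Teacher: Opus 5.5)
Your proposal is correct and follows essentially the same route as the paper. Both arguments take the Hermitian coefficient matrix of $N_\Omega-1$, block-diagonalize it by bidegree using circular invariance, and diagonalize by signature blockwise, so that $d$ of the $Q_\beta$ are linear coordinates whose projection is a linear left inverse of $L_\Omega$; they then conclude with positivity of $N_\Omega(z,\overline z)$ and the formula $K_\Omega=C_\Omega N_\Omega^{-\gamma_\Omega}$. Your justification that the $(1,1)$ block is negative definite, via positive definiteness of $\omega_\Omega$ at $0$ (where $\ii\partial\bar\partial\log N_\Omega$ reduces to $\ii\partial\bar\partial$ of the $(1,1)$ part), is valid and supplies a step the paper only asserts.
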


\begin{proof}
The generic norm is a Hermitian polynomial and the Bergman kernel has
the form
\begin{equation}\label{eq:generic-kernel}
 K_\Omega(z,\overline w)
 =C_\Omega N_\Omega(z,\overline w)^{-\gamma_\Omega}.
\end{equation}
These standard generic-norm facts, including the polynomial expansion,
are recorded in \cite[Sections~2.1--2.2]{Roos}.  Moreover,
$N_\Omega(z,0)=N_\Omega(0,\overline w)=1$, and $ N_\Omega(z,\overline z)>0$ for all $z, w \in\Omega.$
Choose a vector \(\mathcal M(z)\) consisting of the finitely many
nonconstant monomials appearing in \(N_\Omega\).  Hermitian symmetry
yields a finite Hermitian matrix \(A\) such that
\[
 N_\Omega(z,\overline w)
 =1+\mathcal M(z)A\mathcal M(w)^*.
\]
The Harish--Chandra realization is circular, and invariance of the
generic norm under its scalar \(S^1\)-action yields
\[
 N_\Omega(e^{\ii t}z,\overline{e^{\ii t}w})
 =N_\Omega(z,\overline w).
\]
Consequently, only terms having equal holomorphic and antiholomorphic
total degree occur, so the coefficient matrix \(A\) is block diagonal
by degree. 
\eqref{eq:norm-diagonalization} is obtained by the standard algebraic operation.

The bidegree-\((1,1)\) block is the negative of a positive definite
Hermitian form on \(\C^d\).  We may therefore choose \(d\) of the negative
coordinates \(Q_\beta\) to be an invertible system of \emph{purely
linear} coordinates on \(\C^d\).  Projection of \(L_\Omega\) onto these
coordinates is a linear isomorphism.  Hence \(L_\Omega\) is globally
injective and has injective differential everywhere, and is therefore
a holomorphic embedding.

Equation \eqref{eq:norm-diagonalization} and positivity of the generic
norm show that
\[
 1-H_p\bigl(L_\Omega(z),
          \overline{L_\Omega(z)}\bigr)
 =N_\Omega(z,\overline z)>0.
\]
Thus \(L_\Omega(\Omega)\subset\B^{p+q}_p\).  Finally,
\eqref{eq:generic-kernel} yields
\[
 \omega_\Omega
 =-\gamma_\Omega\ii\,\partial\bar\partial
       \log N_\Omega(z,\overline z)
 =\gamma_\Omega L_\Omega^*\eta_{p,p+q},
\]
which is \eqref{eq:generic-isometry}.
\end{proof}

\begin{remark}
For the irreducible Type IV domain \(D_m^{IV}\), \(m\geq3\),
\[
 N_{D_m^{IV}}(Z,\overline Z)
 =1-Z\overline Z^{\,t}
   +\frac14|ZZ^t|^2\]
   with $ \gamma_{D_m^{IV}}=m$, 
 and the embedding in Lemma~\ref{lem:generic-embedding}  becomes
\[ L(Z)=\left(\frac12ZZ^t,Z\right):D_m^{IV}\rightarrow\B^{m+1}_1 \]
satisfying 
$ L^*\eta_{1,m+1}=\frac1m\omega_{D_m^{IV}}$. 
This is exactly the construction used in
 Section 3 of \cite{XY20}.
\end{remark}

We are now ready to prove Theorem~\ref{thm:main}.

\begin{proof}[Proof of Theorem~\ref{thm:main}]
For every \(i\), applying Lemma~\ref{lem:generic-embedding} to
\(\Omega_i\), we get
$ L_i:\Omega_i\rightarrow\B^{M_i}_{\kappa_i}$
satisfying $ L_i^*\eta_{\kappa_i,M_i} =\frac1{\gamma_i}\omega_{\Omega_i}$. 
Let $ G_i=L_i\circ F_i$. 
It follows from 
\eqref{eq:main-assumption} that 
\begin{align*}
 \eta_{0,n} =\frac1{n+1}\omega_{\B^n} =\sum_{i=1}^m
   \frac{\lambda_i}{n+1}F_i^*\omega_{\Omega_i} =\sum_{i=1}^m   \frac{\lambda_i\gamma_i}{n+1}   G_i^*\eta_{\kappa_i,M_i}.
\end{align*}
By  Theorem~\ref{thm:generalized-balls},  there exists
integers \(k_i\geq0\) such that
$ G_i^*\eta_{\kappa_i,M_i}=k_i\eta_{0,n}$ and $ \sum_i\frac{\lambda_i\gamma_i}{n+1}k_i=1.$ 
Since \(L_i^*\eta_{\kappa_i,M_i}=\gamma_i^{-1}\omega_{\Omega_i}\),
we obtain
$$ F_i^*\omega_{\Omega_i}
 =\gamma_iG_i^*\eta_{\kappa_i,M_i}
 =\frac{\gamma_i k_i}{n+1}\omega_{\B^n}.$$
If \(k_i=0\), then \(F_i^*\omega_{\Omega_i}=0\).  The Bergman metric of
\(\Omega_i\) is positive definite, so \(dF_i=0\), making \(F_i\)
constant on \(U\).  This contradicts the hypothesis.
Thus \(k_i\geq1\), proving \eqref{eq:main-quantization}.
Now it follows from Mok's extension theorem  
(Theorem 2.2.1 in \cite{M12}) that $F_i$ extends to a proper holomorphic isometric
embedding of \(\B^n\) into \(\Omega_i\).
\end{proof}

\section*{Acknowledgments}
The first author is deeply indebted to Xiaojun Huang for many discussions on the subject over the years. 
The first author was partially supported by Zhejiang Provincial Natural Science
Foundation of China (Grant No.~LQKWL26A0201). The second author is supported in part by the China Postdoctoral Science Foundation (No. 2024M762395).

\noindent Yuan Yuan, yuanyuan@westlake.edu.cn, Institute for Theoretical Sciences, Westlake University,\\
Hangzhou 310024, Zhejiang, China\\
  
\noindent Xu Zhang, xzhangmath@tongji.edu.cn, School of Mathematical Sciences, Tongji University, Shanghai 200092, China.

\end{document}